\DeclareMathAlphabet{\mathpzc}{OT1}{pzc}{m}{it}
\newcommand{\rad}{{\rm rad}} 
\newcommand{\soc}{{\rm soc}}
\newcommand{\cO}{\mathcal O}
\newcommand{\im}{{\rm{im}}}
\newcommand{\Hom}{\mathrm{Hom}}
\newcommand{\End}{\mathrm{End}}
\newcommand{\fg}{\mathfrak{g}}
\newcommand{\dfs}{{/\kern-2pt/}}
\newcommand{\mZ}{\mathbb{Z}}
\newtheoremstyle{notes} {} {} {} {} {\bfseries} {.} {.5em} {}
\theoremstyle{plain}
\newtheorem{prop}{Proposition}
\newtheorem{lemma}{Lemma}
\newtheorem{thm}{Theorem}
\theoremstyle{remark}
\newtheorem{rem}{Remark}
\newcommand{\fh}{\mathfrak{h}}
\newcommand{\lole}{\mathpzc{ll}}
\newcommand{\grle}{\mathpzc{gl}}
\theoremstyle{remark}
\newtheoremstyle{construction} {} {} {} {} {\bfseries} { } {0pt} {}
\theoremstyle{construction}
\title[Rigidity of tilting modules]{Rigidity of tilting modules in category $\cO$}
\author{Kevin Coulembier}
\keywords{}
\subjclass[2010]{}
\begin{document} 
\date{} 
\begin{abstract}In this note we give an overview of rigidity properties and Loewy length of tilting modules in the BGG category $\cO$ associated to a reductive Lie algebra. These results are well-known by several specialists, but seem difficult to find in the existing literature.
	\end{abstract}

\maketitle 



\section*{Introduction}
In highest weight categories, tilting modules are the modules which have both a standard and costandard filtration, see \cite{Ringel}. In category $\cO$ specifically, they are the self-dual modules with Verma filtration, see \cite{CI, ES}.

In each of the three `classical' Lie-theoretic settings (BGG category $\cO$, quantum groups at roots of unity, modular representation theory of reductive groups), these modules appear with their own applications, see for instance \cite{MaTilt, MO} for category $\cO$, \cite{AK} for quantum groups and \cite{RW} for modular representation theory. The level of understanding of the tilting modules seems to decrease along the above order.

The aim of this note is to write down some results which are known in category $\cO$, but perhaps not written out in full yet, and might be useful for studying the other settings.

\section{The results}In this section we state the main results, the proofs will follow in Section~\ref{SecProofs} and any unexplained notation will be introduced in Section~\ref{SecNot}.

A filtration of a module is called semisimple if all subquotients are semisimple. A {\em Loewy filtration} of a module $M$ is a semisimple filtration of minimal length (we will only consider modules where this is finite). That minimal length is by definition the {\em Loewy length}, $\lole(M)$. A module is {\em rigid} if it only has one Loewy filtration.

In the following theorem, the equivalence of (5) and (6) is a result of Stroppel in~\cite{Stroppel2} and the equivalence of (5), (7) and (8) (in the regular case) is a result of Irving in~\cite{Irving}.
\begin{thm}\label{Thm1}
For any integral dominant $\lambda\in\fh^\ast$ and $x\in W$ with~$y:=w_0x$, the following statements are equivalent:
\begin{enumerate}
\item $T(x\cdot\lambda)$ is rigid;
\item $T(x\cdot\lambda)$ has simple socle;
\item $\End_{\cO}(T(x\cdot\lambda))$ is commutative;
\item The (dual) Verma flag of $T(x\cdot\lambda)$ is multiplicity free;
\item $(P(y\cdot\lambda):\Delta(\lambda))=[\Delta(\lambda):L(y\cdot\lambda)]=1$;
\item $\End_{\cO}(P(y\cdot\lambda))$ is commutative;
\item $P(y\cdot\lambda)$ has simple socle;
\item $P(y\cdot\lambda)$ is rigid.
\end{enumerate}
\end{thm}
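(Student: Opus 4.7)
The plan is to use Ringel duality for the integral block of $\cO$ containing $\lambda$, which matches the indecomposable tilting $T(x\cdot\lambda)$ with the indecomposable projective $P(y\cdot\lambda)=P(w_0 x\cdot\lambda)$, in order to transport each tilting-side statement (1)--(4) to its projective-side counterpart (8)--(5). Combined with the cited Stroppel--Irving equivalences among (5)--(8), this closes the cycle.

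Three of the four transports follow essentially formally from Ringel self-duality, which yields an equivalence of additive categories between the projectives and the tiltings in this block matching our two indecomposables. Under it, $\End_\cO(T(x\cdot\lambda))$ and $\End_\cO(P(y\cdot\lambda))$ are identified up to passage to the opposite algebra (irrelevant for commutativity), so (3) $\Leftrightarrow$ (6); Loewy filtrations are preserved, giving (1) $\Leftrightarrow$ (8) and $\lole(T(x\cdot\lambda))=\lole(P(y\cdot\lambda))$. For (2) $\Leftrightarrow$ (7) I combine the self-duality of $T(x\cdot\lambda)$ under the duality $d$ of $\cO$ (yielding simple socle $\Leftrightarrow$ simple top for the tilting) with the transport of the socle structure of $P(y\cdot\lambda)$ through Ringel duality.

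For (4) $\Leftrightarrow$ (5), Brauer--Humphreys--Soergel reciprocity gives
\[
(T(x\cdot\lambda):\Delta(w\cdot\lambda)) = [\Delta(w_0 w\cdot\lambda):L(y\cdot\lambda)] = (P(y\cdot\lambda):\Delta(w_0 w\cdot\lambda)),
\]
the second equality being BGG reciprocity. So the Verma flag of $T(x\cdot\lambda)$ is multiplicity-free if and only if that of $P(y\cdot\lambda)$ is, and since $(P(y\cdot\lambda):\Delta(\lambda))\geq 1$ always ($L(y\cdot\lambda)$ is a composition factor of $\Delta(\lambda)$ because $y\cdot\lambda\leq\lambda$), (4) $\Rightarrow$ (5) is immediate. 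The main obstacle is the reverse (5) $\Rightarrow$ (4), which demands upgrading a single multiplicity condition to a uniform bound over all $w\in W$. I would sidestep it by closing the circle externally: given (5), the Stroppel--Irving equivalences supply (6)--(8); the above transports then supply (1)--(3); and finally (3) $\Rightarrow$ (4) is extracted from the dimension identity $\dim_\mk\End_\cO(T(x\cdot\lambda))=\sum_\mu(T(x\cdot\lambda):\Delta(\mu))^2$, together with a bound on $\dim\End_\cO(T(x\cdot\lambda))$ forced by its being a commutative local algebra whose Loewy length is transported from $P(y\cdot\lambda)$ via rigidity, which forces the right-hand side to collapse to $\sum_\mu(T(x\cdot\lambda):\Delta(\mu))$, i.e.\ multiplicity-freeness.
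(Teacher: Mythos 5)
There is a genuine gap, and it sits at the centre of your strategy. Ringel duality for $\cO_\lambda$ is an equivalence of exact categories $\cO_\lambda^{\nabla}\to\cO_\lambda^{\Delta}$ (equivalently, of the additive categories $\add(T_\lambda)$ and $\add(P_\lambda)$); it preserves $\Hom$-spaces and $\Delta$-/$\nabla$-flag multiplicities, but it does \emph{not} preserve socles, radicals, Loewy filtrations or Loewy length, because it is not an equivalence of abelian categories. A counterexample inside the very block at hand: the Ringel duality functor sends the simple module $\nabla(w_0\cdot\lambda)=L(w_0\cdot\lambda)$, of Loewy length $1$, to the dominant Verma $\Delta(\lambda)=P(\lambda)$, of Loewy length $\ell(w_0w_0^\lambda)+1$. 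So your claimed transports (1)$\Leftrightarrow$(8) and (2)$\Leftrightarrow$(7) ``because Loewy filtrations and socle structure are preserved'' are unjustified, and your workaround for (5)$\Rightarrow$(4), which routes through those transports and through an unspecified ``bound on $\dim\End_{\cO}(T(x\cdot\lambda))$ forced by commutativity and Loewy length'', collapses with them. What does survive is (3)$\Leftrightarrow$(6) (endomorphism algebras are genuinely identified, Lemma~\ref{LemRingel}(i)) and your reciprocity computation showing that the Verma flag of $T(x\cdot\lambda)$ is multiplicity free iff that of $P(y\cdot\lambda)$ is, whence (4)$\Rightarrow$(5). You also overlooked that (5)$\Rightarrow$(4) has a one-line direct proof: every $\Delta(z\cdot\lambda)$ embeds in $\Delta(\lambda)$ by \eqref{eqBGG}, so $[\Delta(\lambda):L(y\cdot\lambda)]=1$ already forces $[\Delta(z\cdot\lambda):L(y\cdot\lambda)]\le 1$ for all $z$, and BGG reciprocity plus Lemma~\ref{LemRingel}(ii) turns this into multiplicity-freeness of the flag of $T(x\cdot\lambda)$.

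For the links you cannot obtain by transport, the paper argues directly. The socle of $T(x\cdot\lambda)$ consists of copies of $L(w_0\cdot\lambda)$ whose multiplicity equals $(T(x\cdot\lambda):\nabla(w_0\cdot\lambda))$ by \eqref{eqQH}, giving (2)$\Leftrightarrow$(4) with no Ringel duality; the socle of $P(y\cdot\lambda)$ is controlled via the embedding $\Delta(\lambda)^{\oplus r}\hookrightarrow P(y\cdot\lambda)$, giving (7)$\Rightarrow$(5); (8)$\Rightarrow$(5)$\Rightarrow$(7) is taken from Irving (and genuinely needs Kazhdan--Lusztig input); (2)$\Rightarrow$(1) and (7)$\Rightarrow$(8) use the Koszul grading, since a graded module with simple top and socle over a Koszul algebra is rigid. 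Finally, the implication (1)$\Rightarrow$(2), for which your proposal offers no valid argument at all, is proved using the graded Verma flag of $T(x\cdot\lambda)$ (Lemma~\ref{LemFiltT}): the extreme layer of the grading filtration is a single copy of $L(w_0\cdot\lambda)$ coming from the socle of the sub $\Delta(x\cdot\lambda)$, so if the socle of $T(x\cdot\lambda)$ were not simple the socle filtration would differ from the grading filtration, contradicting rigidity.
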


\begin{rem}
The proof of the equivalences
$$(2)\Leftrightarrow(3)\Leftrightarrow(4)\Leftrightarrow(5)\Leftrightarrow(6)\Leftrightarrow(7)$$
in Theorem~\ref{Thm1} does not use the Kazhdan-Lusztig conjecture, nor any results on Koszulity. Irving's proof that (8) implies (2)-(7) relies crucially on the validity of the Kazhdan-Lusztig conjecture. To prove that (1) implies (2)-(7), we furthermore use the existence of a positive grading (the Koszul grading) on $\cO$. For the proof that (2)-(7) imply (1) and (8) we apply Koszulity. This can probably be avoided however, using arguments as in the proof of \cite[Corollary 7]{Irving}.
\end{rem}

It is known, see the following lemma, that $T(x\cdot\lambda)$ is actually a submodule of $P(y\cdot\lambda)$. Theorem~\ref{Thm1} thus implies that (non-)rigidity is directly inherited by these submodules. Note that in general, a submodule of a (non-)rigid module need not be (non-)rigid.
\begin{lemma}\label{LemTrace}
With notation as in Theorem~\ref{Thm1}, the module $T(x\cdot\lambda)$ is the trace of $P(w_0\cdot\lambda)$ in $P(y\cdot\lambda)$. In other words, with  $H:=\Hom_{\cO}(P(w_0\cdot\lambda),P(y\cdot\lambda))$, we have
$$T(x\cdot\lambda)\;\cong\;\cup_{f\in H}\im(f).$$
\end{lemma}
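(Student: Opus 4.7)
The plan is to prove both inclusions $T(x\cdot\lambda)\subseteq N$ and $N\subseteq T(x\cdot\lambda)$ inside $P(y\cdot\lambda)$, where $N:=\sum_{f\in H}\im(f)$. A key intermediate observation is that the top of $T(x\cdot\lambda)$ is a direct sum of copies of $L(w_0\cdot\lambda)$: any $\Delta$-filtered module in the block has socle contained in $L(w_0\cdot\lambda)^{\oplus\bullet}$ (a simple submodule embeds into the bottom Verma term of any standard filtration, whose socle is the antidominant simple), and $T(x\cdot\lambda)$ is Jantzen self-dual, so its top mirrors its socle.

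For the inclusion $T(x\cdot\lambda)\subseteq N$, I first use the classical embedding $T(x\cdot\lambda)\hookrightarrow P(y\cdot\lambda)$, which is a consequence of the Ringel self-duality of the block $\cO_\lambda$ (Ringel duality interchanges indecomposable projectives and tiltings via $x\leftrightarrow w_0x$, providing the embedding). Given the description of the top above, the projective cover of $L(w_0\cdot\lambda)^{\oplus k}$ yields a surjection $P(w_0\cdot\lambda)^{\oplus k}\twoheadrightarrow T(x\cdot\lambda)$; composing with $T(x\cdot\lambda)\hookrightarrow P(y\cdot\lambda)$ produces $k$ maps in $H$ whose images together span $T(x\cdot\lambda)$.

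For the reverse inclusion it suffices to show
\[\Hom_{\cO}\bigl(P(w_0\cdot\lambda),\,P(y\cdot\lambda)/T(x\cdot\lambda)\bigr)=0,\]
since then every $f\in H$ automatically factors through $T(x\cdot\lambda)$. By projectivity this Hom space has dimension $[P(y\cdot\lambda)/T(x\cdot\lambda):L(w_0\cdot\lambda)]$, so the task becomes the multiplicity equality $[P(y\cdot\lambda):L(w_0\cdot\lambda)]=[T(x\cdot\lambda):L(w_0\cdot\lambda)]$. Using $[\Delta(z\cdot\lambda):L(w_0\cdot\lambda)]=1$ for every $z\in W$, BGG reciprocity $(P(y\cdot\lambda):\Delta(z\cdot\lambda))=[\Delta(z\cdot\lambda):L(y\cdot\lambda)]$, and Brauer--Humphreys reciprocity $(T(x\cdot\lambda):\Delta(z\cdot\lambda))=(P(z\cdot\lambda):\Delta(x\cdot\lambda))=[\Delta(x\cdot\lambda):L(z\cdot\lambda)]$, the problem reduces to
\[\sum_{z\in W}[\Delta(z\cdot\lambda):L(y\cdot\lambda)]=\sum_{z\in W}[\Delta(x\cdot\lambda):L(z\cdot\lambda)]\]
with $y=w_0x$.

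The main obstacle is this last identity: the row sum of Kazhdan--Lusztig multiplicities at $x$ equals the column sum at $w_0x$. This is a genuine Kazhdan--Lusztig self-duality input (equivalently, a Grothendieck-group shadow of the Ringel self-duality of $\cO_\lambda$). Once this identity and the classical embedding are in hand, the remaining steps are formal, and the two inclusions close the argument.
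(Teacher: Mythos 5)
Your strategy (two inclusions inside $P(y\cdot\lambda)$, with the reverse inclusion reduced to the multiplicity identity $[P(y\cdot\lambda):L(w_0\cdot\lambda)]=[T(x\cdot\lambda):L(w_0\cdot\lambda)]$) is genuinely different from the paper's proof, which starts from the trivial observation that $L(w_0\cdot 0)\cong T(w_0\cdot 0)$ is the trace of $P(w_0\cdot 0)$ in $\Delta(w_0\cdot 0)$ and transports this along projective functors: since $\theta_w$ is biadjoint to $\theta_{w^{-1}}$ and preserves $\add P(w_0\cdot 0)$, it sends the trace of $P(w_0\cdot0)$ in $M$ to the trace in $\theta_wM$, and $\theta_wL(w_0\cdot 0)\cong T(w_0w\cdot 0)$ while $\theta_wP(0)\cong P(w\cdot 0)$. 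Your route could be made to work, but as written it has two genuine gaps.

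First, the embedding $T(x\cdot\lambda)\hookrightarrow P(y\cdot\lambda)$ is not a formal consequence of Ringel self-duality in the way you invoke it: Ringel duality is an equivalence $\cO^{\nabla}_\lambda\to\cO^{\Delta}_\lambda$ that sends $T(x\cdot\lambda)$ to $P(y\cdot\lambda)$ \emph{as objects}; it does not produce a morphism $T(x\cdot\lambda)\to P(y\cdot\lambda)$ inside $\cO_\lambda$. The embedding is true, but it needs a citation or an argument; the standard argument applies the exact functor $\theta_{y}$ to $L(w_0\cdot 0)=\soc\Delta(0)\subset P(0)$, which is essentially the paper's entire proof of the lemma, so you cannot treat it as free input without circularity. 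Second, your reduction of the multiplicity identity uses $(T(x\cdot\lambda):\Delta(z\cdot\lambda))=(P(z\cdot\lambda):\Delta(x\cdot\lambda))$, which is not the correct reciprocity: Soergel's formula (Lemma~\ref{LemRingel}(ii)) reads $(T(x\cdot\lambda):\Delta(z\cdot\lambda))=(P(w_0xw_0^\lambda\cdot\lambda):\Delta(w_0zw_0^\lambda\cdot\lambda))=[\Delta(w_0zw_0^\lambda\cdot\lambda):L(y\cdot\lambda)]$. Your version amounts to asserting that the matrix of Verma multiplicities equals its own inverse transpose, which fails in general (it happens to hold in multiplicity-free cases, which is why small examples do not detect it). With the correct formula the identity you need is immediate: summing over $z\in X_\lambda$ and reindexing $z\mapsto w_0zw_0^\lambda$ gives $[T(x\cdot\lambda):L(w_0\cdot\lambda)]=\sum_{z\in X_\lambda}[\Delta(z\cdot\lambda):L(y\cdot\lambda)]=[P(y\cdot\lambda):L(w_0\cdot\lambda)]$, so the ``row sum equals column sum'' identity you single out as the main obstacle is not needed at all — but as it stands you leave your (mis-stated) version of it unproven, so the argument is incomplete.
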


Blocks in category $\cO$ are equivalent to finite dimensional Koszul algebras, see \cite{BGS}. Hence, the Loewy length $\lole$ is bounded by the graded length $\grle$, see Section~\ref{SecSocRad}. In the case of tilting modules we can say more.

\begin{prop}\label{PropLL}
For any tilting module in $\cO$, the graded and Loewy length coincide. Concretely, for any integral dominant $\lambda\in\fh^\ast$ and $x$ with maximal length in $\{y\in W\,|\,y\cdot\lambda=x\cdot\lambda\}$, we have
$$\lole( T(x\cdot\lambda))\;=\;\grle ( T(x\cdot\lambda))\;=\; 2\ell(w_0x)+1.$$
\end{prop}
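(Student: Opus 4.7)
The plan is to prove both equalities $\lole(T(x\cdot\lambda)) = \grle(T(x\cdot\lambda)) = 2\ell(w_0 x)+1$ using the Koszul grading on category $\cO$ from Beilinson-Ginzburg-Soergel. Tilting modules admit graded lifts, and in the regular case each graded Verma $\Delta(y\cdot\lambda)$ has graded length $\ell(w_0 y)+1$; the singular case is absorbed by the longest-representative condition on $x$ in the statement.

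For the graded length I use the graded Verma flag of $T(x\cdot\lambda)$. Normalize the graded lift so the top of $T(x\cdot\lambda)$ sits in degree $0$. The submodule $\Delta(x\cdot\lambda)\hookrightarrow T(x\cdot\lambda)$ (the highest-weight Verma, at the bottom of the Verma flag) then occupies degrees $[\ell(w_0 x),\,2\ell(w_0 x)]$, with its own top $L(x\cdot\lambda)$ at degree $\ell(w_0 x)$ and socle $L(w_0\cdot\lambda)$ at degree $2\ell(w_0 x)$. Self-duality of tiltings (in the graded sense) forces the quotient $T(x\cdot\lambda)/\Delta(x\cdot\lambda)$ to occupy the complementary range $[0,\,\ell(w_0 x)-1]$, yielding $\grle(T(x\cdot\lambda)) = 2\ell(w_0 x)+1$.

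The inequality $\lole \leq \grle$ is general: over any positively graded algebra with semisimple degree-zero part, the grading filtration is a semisimple filtration, so its length bounds the Loewy length. The reverse inequality $\lole(T(x\cdot\lambda)) \geq 2\ell(w_0 x)+1$ is the main obstacle. The strategy is to show that the grading filtration coincides with the radical filtration on the graded lift -- equivalently, that the top of $T(x\cdot\lambda)$ is concentrated in degree $0$ and, dually, that the socle sits in degree $2\ell(w_0 x)$. One natural route is via Ringel-Koszul self-duality of $\cO$, which exchanges indecomposable tilting modules with indecomposable projectives: projectives over a Koszul algebra are linear in the sense that their radical filtration coincides with their grading filtration, and one aims to transport this linearity property across the duality to tilting modules.
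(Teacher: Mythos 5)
Your upper bound is fine and matches the paper: the grading filtration is semisimple, so $\lole\le\grle$, and the computation $\grle(T(x\cdot\lambda))=2\ell(w_0x)+1$ is exactly what the paper cites from \cite{SHPO4}. The gap is in your strategy for the lower bound. You propose to show that the grading filtration of $T(x\cdot\lambda)$ coincides with the radical filtration (equivalently, that the graded lift is generated in its lowest degree). But $T(x\cdot\lambda)$ is self-dual under the graded duality, which exchanges radical and socle series and preserves the grading filtration up to reversal of indexing; so if the radical filtration equalled the grading filtration, the socle filtration would as well, and $T(x\cdot\lambda)$ would be rigid. Theorem~\ref{Thm1} shows rigidity fails in general, so the statement you are aiming for is false precisely for the non-rigid tilting modules -- which are the only interesting case. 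The proposed repair via Ringel duality cannot save this: the equivalence $\Hom_{\cO_\lambda}(T_\lambda,-)\colon\cO^\nabla_\lambda\to\cO^\Delta_\lambda$ sends tiltings to projectives but is not an equivalence of abelian categories and does not preserve tops, socles or Loewy length (projective covers always have simple top, whereas $T(x\cdot\lambda)$ usually does not), so linearity of projectives does not transport.

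The proposition is nonetheless true because two semisimple filtrations of different shapes can have the same length, and the paper's lower bound argument exploits this without any Koszulity. The composition factor $L(x\cdot\lambda)$ occurs exactly once in $T:=T(x\cdot\lambda)$, since the weight space $T_{x\cdot\lambda}$ is one-dimensional. Given any semisimple filtration $0=T_0\subset T_1\subset\cdots\subset T_k=T$, let $j$ be the index of the layer containing $L(x\cdot\lambda)$. Since $L(x\cdot\lambda)$ is the top of the submodule $\Delta(x\cdot\lambda)\subseteq T$ from \eqref{DefT}, the quotient $(\Delta(x\cdot\lambda)+T_j)/T_j$ kills the top of $\Delta(x\cdot\lambda)$ and hence vanishes, so $\Delta(x\cdot\lambda)\subseteq T_j$ and $j\ge\lole\Delta(x\cdot\lambda)=\ell(w_0x)+1$. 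Applying $\vee$ to the filtration of $T=T^\vee$ produces a second semisimple filtration in which $L(x\cdot\lambda)$ sits in layer $k-j+1$, so the same argument gives $k-j+1\ge\ell(w_0x)+1$. Adding the two inequalities yields $k\ge 2\ell(w_0x)+1$, which is the missing bound.
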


In \cite{Hazi}, it is proved that tilting modules for quantum groups possess a `balanced' Loewy filtration, see Section~\ref{SecSocRad}. Furthermore, a very elegant algorithm to determine the layers of this filtration is given, although of course it still relies on Kazhdan-Lusztig combinatorics. The existence of a Koszul grading on $\cO$ along with some well known facts concerning tilting modules, imply that the analogue of that algorithm always yields the grading filtration (which is balanced by the self-duality of tilting modules) in category $\cO$.
\begin{lemma}\label{LemHazi} The following algorithm of \cite{Hazi} yields the grading filtration for tilting modules in category $\cO$.
\begin{enumerate}[a.]
\item Write the grading filtration of the Verma module $\Delta(\lambda)$. View this as a partial Loewy series for $T(\lambda)$ (namely the bottom layers). We will reflect Loewy layers about the ``middle'' Loewy layer in which $L(\lambda)$ appears.

\item Pick the highest ``unbalanced'' weight; that is, the largest $\mu<\lambda$ such that $L(\mu)$ appears below $L(\lambda)$ but there is no corresponding factor $L(\mu)$ in the reflected layer above $L(\lambda)$. \label{item:restart}

\item Add the grading filtration of $\Delta(\mu)$ to the partial Loewy series so that the head of $\Delta(\mu)$ is in the reflected Loewy layer above $L(\lambda)$.

\item Repeat from Step \ref{item:restart} until the Loewy series is balanced.
\end{enumerate}

\end{lemma}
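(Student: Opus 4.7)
The plan is to work in the Koszul-graded version $\widetilde{\cO}$ of category $\cO$, and identify the partial filtration constructed by the algorithm with the grading filtration of a suitably normalized graded lift of $T(\lambda)$. Since $T(\lambda)$ is self-dual, its graded lift (unique up to overall shift) may be fixed so that the graded character is palindromic about a middle degree; normalize this middle to be degree $0$, the degree of $L(\lambda)$. By Koszulity, every Verma $\Delta(\mu)$ has a canonical graded lift whose grading filtration coincides with its radical filtration, with $L(\mu)$ on top in degree $0$ and all other composition factors in strictly negative degrees.

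The key structural input is that graded $T(\lambda)$ admits a graded Verma flag with subquotients of the form $\Delta(\mu)\langle d\rangle$, and the grading filtration of $T(\lambda)$ is obtained by superimposing, degree-by-degree, the grading filtrations of these shifted Vermas (each graded Verma contributing its grading layers at the shifted positions). Since $\Delta(\lambda)$ embeds as a graded submodule of $T(\lambda)$ with head $L(\lambda)$ in degree $0$, the bottom submodule in the flag is $\Delta(\lambda)$ unshifted, and its grading filtration sits inside the non-positive degree portion of $T(\lambda)$'s grading filtration; this is step (a) of the algorithm.

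I would then proceed by induction on the dominance order to match each subsequent step with the next Verma subquotient. Suppose at some stage $\mu$ is the largest weight for which $L(\mu)$ is unbalanced in the current partial filtration. Palindromicity of the graded character of $T(\lambda)$ together with the inductive hypothesis (all Vermas with head of weight $>\mu$ are already placed) forces the next Verma in the flag to be a shifted $\Delta(\mu)$: no larger Verma can still contribute to an unaccounted copy of $L(\mu)$, and no Verma with head of weight $<\mu$ has $L(\mu)$ as a composition factor. The required shift $d$ is uniquely pinned down by the requirement that the head $L(\mu)$ lands in the reflected layer above $L(\lambda)$, which matches step (c); the body of $\Delta(\mu)\langle d\rangle$ automatically supplies the correct graded composition factors at lower degrees, possibly creating new unbalanced weights to be dealt with in later iterations.

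The main obstacle is to justify this forcing step rigorously and confirm that the accumulated filtration really is the grading filtration rather than merely a semisimple filtration with the correct character. This relies on Koszulity (ensuring each graded Verma contributes precisely its grading filtration to that of $T(\lambda)$), on palindromicity of the graded character (which together with the known graded multiplicities of Vermas in $T(\lambda)$ determines all shifts uniquely), and on the fact that extending the Verma flag corresponds to superimposing graded pieces at the correct shifted positions. The algorithm terminates once the partial filtration is balanced, at which point it realizes a palindromic semisimple filtration of $T(\lambda)$ with the correct graded character, and hence coincides with the grading filtration.
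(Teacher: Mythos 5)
Your overall strategy is the same as the paper's: its proof is a one-liner combining the structure of the graded Verma flag of $T(\lambda)$ (Lemma~\ref{LemFiltT}), the constraint~\eqref{eqBGG} on which Verma modules can contain a given $L(\mu)$, and graded self-duality of tilting modules. Your induction on the highest unbalanced weight, with palindromicity of the graded character supplying the balancing condition and additivity of graded characters along the graded Verma flag reducing everything to identifying the multiset of shifts $\Delta(\mu)\langle l\rangle$, is a correct elaboration of exactly these ingredients.

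However, there is a genuine gap at the step you yourself flag as the ``forcing step''. You assert that ``the required shift $d$ is uniquely pinned down by the requirement that the head $L(\mu)$ lands in the reflected layer above $L(\lambda)$'', but this presupposes that every Verma subquotient other than the bottom copy of $\Delta(\lambda)$ has its head \emph{strictly above the middle layer}, i.e.\ occurs as $\Delta(\mu)\langle l\rangle$ with $l<0$ in the paper's normalisation. That is precisely Lemma~\ref{LemFiltT}, whose proof rests on \cite[Lemma~2.4]{MaTilt} (plus graded translation in the singular case), and it does not follow from anything you have written. Without it the argument fails quantitatively: writing $[T(\lambda):L(\mu)]_q=A(q)+c_\mu(q)$, where $A(q)$ is the inductively known contribution of the bodies of the already-placed Vermas $\Delta(\nu)\langle l\rangle$ with $\nu>\mu$ and $c_\mu(q)$ is the generating function of the shifts of the copies of $\Delta(\mu)$, palindromicity only yields $c_\mu(q)-c_\mu(q^{-1})=A(q^{-1})-A(q)$, which determines $c_\mu$ only once one knows it is supported in strictly negative degrees. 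If a copy of $\Delta(\mu)$ could sit with its head at or below the middle layer, an ``unbalanced'' factor below $L(\lambda)$ might itself be such a head, and the algorithm's prescription of always reflecting upwards would produce the wrong layers. Supplying a proof or citation of Lemma~\ref{LemFiltT} closes the gap.
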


\section{Category $\cO$, Ringel duality and Koszul grading}\label{SecNot}
We work over the ground field $\mathbb{C}$ of complex numbers.

\subsection{The Lie algebra}Let $\mathfrak{g}$ be a {\em reductive Lie algebra} with a fixed {\em triangular decomposition}
\begin{equation}\label{eq1}
\mathfrak{g}\;=\;\mathfrak{n}^-\oplus\mathfrak{h}\oplus\mathfrak{n}^+.
\end{equation}
Here $\mathfrak{h}$ is a fixed {\em Cartan subalgebra} and
$\mathfrak{b}=\mathfrak{h}\oplus\mathfrak{n}^+$ is the corresponding {\em Borel subalgebra}.
We denote by $W$ the associated Weyl group with longest element $w_0$. 
The half of the sum of all positive roots is denoted by $\rho\in\mathfrak{h}^\ast$
and the $W$-invariant form on $\mathfrak{h}^*$ is denoted $\langle \cdot,\cdot\rangle$.
The dot action of $W$ on $\fh^\ast$ is denoted by $w\cdot\lambda=w(\lambda+\rho)-\rho$. The partial order $\le$ on $W$ is the Bruhat order, see \cite[Section~0.4]{Humphreys}, with convention that the identity element $e\in W$ is minimal.

Let~$\Lambda\subset\mathfrak{h}^\ast$ denote the set of
{\em integral weights}, that is weights which appear in finite dimensional
$\mathfrak{g}$-modules.  The {\em dominant weights} form the subset
\begin{displaymath}
\Lambda^+=\{\lambda\,|\,\langle \lambda+\rho,\alpha\rangle \ge 0,\;\;\mbox{for all $\alpha\in \Delta^+$}\}.
\end{displaymath}
For $\lambda\in \Lambda^+$, we denote by $W_\lambda\subset W$ its stabiliser subgroup under the
dot action and by $w_0^\lambda$ the longest element in~$W_\lambda$. The set of longest
representatives in~$W_\lambda\backslash W$ is denoted by $X_\lambda$.

\subsection{Category $\cO$}

Consider the {\em BGG category  $\mathcal{O}$} associated to the triangular decomposition~\eqref{eq1},
see \cite{BGG, Humphreys}. Simple objects in~$\mathcal{O}$ are, up to isomorphism,
{\em simple highest weight modules} $L(\mu)$, where $\mu\in\mathfrak{h}^\ast$. The module
$L(\mu)$ is the simple top of the {\em Verma module}
$\Delta(\mu)$ and has highest weight~$\mu$. The projective cover of $L(\mu)$ in~$\mathcal{O}$
is denoted $P(\mu)$. The injective envelope of $L(\mu)$ in~$\mathcal{O}$
is denoted $I(\mu)$. 

Category $\cO$ has the simple preserving duality functor $\vee$ of \cite[Section~3.2]{Humphreys}.
The dual Verma module with socle $L(\mu)$ is $\nabla(\mu):=\Delta(\mu)^\vee$. The exact subcategory of $\cO$ of modules with a {\em Verma flag} (or standard filtration), see~\cite[Section~3.7]{Humphreys}, is denoted by $\cO^\Delta$. We also have the category $\cO^{\nabla}$ of modules with dual Verma flag. 
By \cite[Theorem~3.7]{Humphreys}, we have
\begin{equation}\label{eqQH}(M:\nabla(\mu))\;=\;\dim\Hom_{\cO}(\Delta(\mu),M),\end{equation}
for any $M\in \cO^{\nabla}$ and $\mu\in\fh^\ast$.
We also have $P(\mu)\in\cO^{\Delta}$ and the BGG reciprocity relation
$$(P(\mu):\Delta(\nu))=[\Delta(\nu):L(\mu)],$$
for all $\mu,\nu\in\fh^\ast$, see \cite[Theorem~3.11]{Humphreys}.

We will only consider the {\em integral part} $\mathcal{O}_\Lambda$ of $\mathcal{O}$ which contains all modules
with weights in~$\Lambda$. This is justified by \cite[Theorem~11]{SoergelD}.
The category $\mathcal{O}_\Lambda$ decomposes into {\em indecomposable} blocks
as follows:
\begin{displaymath}
\mathcal{O}_\Lambda\;=\;\bigoplus_{\lambda\in\Lambda^+}\mathcal{O}_\lambda,
\end{displaymath}
where $\mathcal{O}_\lambda$, for $\lambda\in\Lambda^+$, is the Serre subcategory of $\mathcal{O}$ generated by
all simples of the form~$L(x\cdot\lambda)$, where $x\in X_\lambda$. By \cite[Theorem~5.1]{Humphreys}, we have for all $\lambda\in \Lambda^+$ and $x,y\in X_\lambda$
\begin{equation}\label{eqBGG}
[\Delta(x\cdot\lambda):L(y\cdot\lambda)]\not=0\quad\Leftrightarrow\quad x\le y\quad\Leftrightarrow\quad \Delta(y\cdot\lambda)\subset \Delta(x\cdot\lambda).
\end{equation}
By \cite[Section~4.1]{Humphreys}, the socle of $\Delta(x\cdot\lambda)$, for any $x\in X_\lambda$, is $L(w_0\cdot\lambda)$. In particular, the socle of any module in $\cO^\Delta_\lambda$ is a direct sum of modules isomorphic to $L(w_0\cdot\lambda)$.

Let $A_\lambda$ be the endomorphism algebra of the projective generator 
$$P_\lambda:=\bigoplus_{x\in X_\lambda}P(x\cdot\lambda).$$ Then we have an equivalence of categories
$$\Hom_{\cO_\lambda}(P_\lambda,-):\;\;\cO_\lambda\;\tilde\to\;A_\lambda\mbox{-mod}.$$
We will use the same notation for the $\fg$-module $M\in \cO_\lambda$ and the $A_\lambda$-module $\Hom_{\cO_\lambda}(P_\lambda,M)$.

\subsection{Tilting modules}
For each $\mu\in \Lambda$, we have a unique indecomposable $\vee$-self-dual module $T(\mu)$ with a short exact sequence
\begin{equation}\label{DefT}0\to \Delta(\mu)\to T(\mu)\to K\to 0,\qquad\mbox{for some $K\in\cO^\Delta$,}\end{equation}
see~\cite[Chapter~11]{Humphreys}. We refer to~$T(\mu)$ as the {\em tilting module} corresponding to~$\mu$. The weight~$\mu$ is also the highest weight for which $T(\mu)$ has a non-zero weight space and the weight space~$T(\mu)_\mu$ has dimension one. 

The endomorphism algebra of 
$$T_\lambda:=\bigoplus_{x\in X_\lambda} T(x\cdot\lambda)$$ is known as the {\em Ringel dual algebra} of $A_\lambda$. It follows from \cite{SoergelT} or \cite{prinjective} that $A_\lambda$ is in fact Ringel self-dual\footnote{Note that the Ringel self-duality in~\cite{SoergelT} is somewhat hidden, as it gives Ringel duality between $A_{\lambda}$ and $A_{-(w_0(\lambda+\rho)+\rho)}$. However, since we have $W_{-w_0(\lambda+\rho)-\rho}=w_0W_\lambda w_0$, it follows from \cite[Theorem~11]{SoergelD} that $A_\lambda$ and $A_{-(w_0(\lambda+\rho)+\rho)}$ are actually isomorphic.}. As a consequence, we have the following lemma.

\begin{lemma}\label{LemRingel}
For every $\lambda\in \Lambda^+$ and $x,y\in X_\lambda$, we have
\begin{enumerate}[(i)]
\item $\End_{\cO}(T(x\cdot\lambda))\;\cong\;\End_{\cO}(P(w_0xw_0^\lambda\cdot\lambda))$;
\item $(T(x\cdot\lambda):\Delta(y\cdot\lambda))=(T(x\cdot\lambda):\nabla(y\cdot\lambda))=(P(w_0xw_0^\lambda \cdot\lambda):\Delta(w_0yw_0^\lambda\cdot\lambda))$.
\end{enumerate}
\end{lemma}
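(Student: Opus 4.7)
The plan is to invoke Soergel's Ringel self-duality for $A_\lambda$ and trace through the weight correspondence. Write $B_\lambda := \End_{\cO_\lambda}(T_\lambda)^{\op}$ for the Ringel dual of $A_\lambda$. The functor $R := \Hom_{\cO_\lambda}(-,T_\lambda): \cO_\lambda^\Delta \to B_\lambda\text{-mod}$ is a contravariant exact equivalence onto the full subcategory of $B_\lambda$-modules with standard filtration, sending $T(x\cdot\lambda)$ (as an indecomposable summand of $T_\lambda$) to the corresponding indecomposable projective of $B_\lambda$, and sending $\Delta(y\cdot\lambda)$ to the standard object of $B_\lambda\text{-mod}$ in the reversed indexing. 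As explained in the footnote, Soergel's theorem provides a Ringel duality between $A_\lambda$ and $A_{\lambda'}$ with $\lambda' := -(w_0(\lambda+\rho)+\rho)$, while the translation principle identifies $A_{\lambda'} \cong A_\lambda$ via the equality $W_{\lambda'} = w_0 W_\lambda w_0$ of stabilisers. Tracking the weight labels through this composite yields the bijection $x\cdot\lambda \mapsto w_0 x w_0^\lambda \cdot \lambda$, under which $R(T(x\cdot\lambda))$ is identified with $P(w_0 x w_0^\lambda \cdot\lambda)$ and $R(\Delta(y\cdot\lambda))$ with $\Delta(w_0 y w_0^\lambda \cdot\lambda)$ in $\cO_\lambda$.

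For part (i), since $R$ is a contravariant equivalence, it induces an isomorphism $\End_{\cO}(T(x\cdot\lambda)) \cong \End_{\cO}(P(w_0 x w_0^\lambda \cdot\lambda))^{\op}$. The simple-preserving duality $\vee$ satisfies $T(\mu)^\vee \cong T(\mu)$ by definition of tilting modules, so $\vee$ restricts to a ring anti-involution on $\End_{\cO}(T(x\cdot\lambda))$, giving $\End_{\cO}(T(x\cdot\lambda)) \cong \End_{\cO}(T(x\cdot\lambda))^{\op}$. Combining the two isomorphisms produces the desired $\End_{\cO}(T(x\cdot\lambda)) \cong \End_{\cO}(P(w_0 x w_0^\lambda \cdot\lambda))$.

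For part (ii), the first equality $(T(x\cdot\lambda):\Delta(y\cdot\lambda)) = (T(x\cdot\lambda):\nabla(y\cdot\lambda))$ is immediate from self-duality: applying $\vee$ to a Verma filtration of $T(x\cdot\lambda)$ yields, in view of $\Delta(\mu)^\vee = \nabla(\mu)$ and $T(x\cdot\lambda)^\vee \cong T(x\cdot\lambda)$, a dual Verma filtration of $T(x\cdot\lambda)$ with the same multiplicities. The second equality follows from the fact that the contravariant exact equivalence $R$ preserves filtration multiplicities, so $(T(x\cdot\lambda) : \Delta(y\cdot\lambda)) = (R(T(x\cdot\lambda)) : R(\Delta(y\cdot\lambda))) = (P(w_0 x w_0^\lambda \cdot\lambda) : \Delta(w_0 y w_0^\lambda\cdot\lambda))$. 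The main obstacle in this argument is the bookkeeping in the first paragraph, namely verifying that the composite of Soergel's Ringel duality with the translation isomorphism $A_{\lambda'} \cong A_\lambda$ induces precisely the bijection $x\cdot\lambda \mapsto w_0 x w_0^\lambda \cdot \lambda$; once that combinatorial check is in place, the rest is formal.
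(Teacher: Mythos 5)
Your argument is essentially the paper's: both reduce everything to Soergel's Ringel self-duality of $A_\lambda$ together with the combinatorial identification of labels $x\cdot\lambda\mapsto w_0xw_0^\lambda\cdot\lambda$, and both get the first equality in (ii) from the $\vee$-self-duality of tilting modules; the only cosmetic difference is that you use the contravariant functor $\Hom_{\cO_\lambda}(-,T_\lambda)$ and compensate for the resulting $\mathrm{op}$ by the anti-involution that $\vee$ induces on $\End_{\cO}(T(x\cdot\lambda))$, whereas the paper uses the covariant $\Hom_{\cO_\lambda}(T_\lambda,-):\cO_\lambda^{\nabla}\to\cO_\lambda^{\Delta}$. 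The one step you explicitly flag but do not carry out --- that the composite of Soergel's duality with the translation identification $A_{\lambda'}\cong A_\lambda$ really induces the bijection $x\mapsto w_0xw_0^\lambda$ on labels --- is precisely the point the paper also does not reprove but instead defers to \cite[Theorem~8.1]{dualities}, so you should either cite that result or supply the bookkeeping.
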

\begin{proof}
The equation $(T(x\cdot\lambda):\Delta(y\cdot\lambda))=(T(x\cdot\lambda):\nabla(y\cdot\lambda))$ is an immediate consequence of the self-duality of tilting modules under $\vee$. 
The other equalities follow from general principles of Ringel duality in~\cite[Section~6]{Ringel} applied to the case $\cO_\lambda$ in~\cite{SoergelT, prinjective}.
The combinatorics of the corresponding equivalence of categories (where we identify $A_\lambda$-mod and $\cO_\lambda$)
$$\Hom_{\cO_\lambda}(T_\lambda,-):\cO_\lambda^{\nabla}\to\cO_\lambda^{\Delta},$$ which by construction maps tilting modules to projective modules, is for instance worked out in~\cite[Theorem~8.1]{dualities}.

Note that we can combine Soergel's combinatorial functor $\mathbb{V}_\lambda=\Hom_{\cO_\lambda}(P(w_0\cdot\lambda),-)$, \cite[Struktursatz~9]{SoergelD} and Lemma~\ref{LemTrace} to give another proof of the isomorphism between the algebras $\End_{\cO}(T(x\cdot\lambda))$ and $\End_{\cO}(P(y\cdot\lambda))$.
\end{proof}

\subsection{Socle, radical and grading filtration}\label{SecSocRad}We review the two extremal Loewy filtrations, see also~\cite[Section~1.2]{Irving}. The {\em socle filtration},
$$0=\soc_0(M)\subset \soc_1(M)\subset \soc_2(M)\subset\cdots\subset M,$$
is the filtration where $\soc_k(M)$ is the unique submodule of $M$ such that $\soc_k(M)/\soc_{k-1}(M)$ is the socle of $M/\soc_{k-1}(M)$.
The {\em radical filtration},
$$0\subset \cdots\subset \rad^2(M)\subset\rad^1(M)\subset \rad^0{M}=M,$$
is the filtration where $\rad^i(M)$ is the radical of $\rad^{i-1}(M)$.

With $d=\lole(M)$, for any Loewy filtration
$$0=F_0M\subset F_1M\subset\cdots F_{d-1}M\subset F_dM=M,\quad\;\;\mbox{we have}\qquad\rad^{d-i}(M)\subseteq F_iM\subseteq\soc_i(M).$$
Clearly, a module is rigid if and only if the socle and radical filtration coincide.

A Loewy filtration $F_\bullet M$ of a module $M$, with~$d=\lole(M)$, is {\em balanced} if
$$F_iM/F_{i-1}M\;\cong\; F_{d-i+1}M/F_{d-i}M,\qquad \mbox{for all $1\le i\le d$.}$$

A finite dimensional algebra $A$ is `positively graded' if it has a grading $A=\bigoplus_{i\ge 0}A_i$, with~$A_0$ semisimple. Consider a finite dimensional graded module $M=\bigoplus_{i\in\mZ}M_i$. Let $k$ be the minimal degree for which $M_{k}$ is non-zero. Then the filtration of $M$ with 
$$F_iM=\bigoplus_{j< i+k}M_j,$$
is clearly a semisimple filtration and known as the {\em grading filtration}. The length of this filtration is the {\em graded length} $\grle(M)$. By definition, $\lole(M)\le \grle(M)$.

\subsection{The Koszul grading}
In \cite[Theorem~1.1.3]{BGS}, it is proved that $A_\lambda$ has a Koszul grading. 
The following lemma will therefore be relevant.
\begin{lemma}\cite[Proposition~2.4.1]{BGS}\label{LemBGS}
If a graded module $M$ over a Koszul algebra $A$ has simple socle (resp. simple top), then the socle filtration (resp. radical filtration) of $M$ coincides with the grading filtration.
In those cases, $\lole(M)=\grle(M)$.

Consequently, if $M$ has both simple top and socle, it is rigid.
\end{lemma}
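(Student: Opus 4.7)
The plan is to leverage the defining Koszul purity of $\Ext$: for simple graded $A$-modules $L, L'$ concentrated in a single degree, $\Ext^i_A(L, L'\langle j\rangle)$ vanishes unless $i$ matches $|j|$ (with the appropriate sign convention for the shift $\langle\cdot\rangle$), cf.~\cite[Theorem~2.1.3]{BGS}. This purity should translate directly into the statement that consecutive socle layers differ in grading by exactly one.

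For the simple-socle half of the lemma, I would assume $M$ has simple socle and, after a grading shift, place $\soc(M)$ in the minimum degree $k$ of $M$. The goal is then to prove by induction on $i$ that $\soc_i(M)/\soc_{i-1}(M)$ is concentrated purely in degree $i+k-1$; this immediately identifies $\soc_i(M)$ with $\bigoplus_{j < i+k} M_j$, so the socle filtration equals the grading filtration and $\lole(M)=\grle(M)$. For the inductive step, take a simple summand $L'\langle s\rangle$ of $\soc_{i+1}(M)/\soc_i(M)$ and pull back to the preimage $E\subseteq \soc_{i+1}(M)/\soc_{i-1}(M)$. The short exact sequence
\[
0 \to \soc_i(M)/\soc_{i-1}(M) \to E \to L'\langle s\rangle \to 0
\]
is non-split (a splitting would embed $L'\langle s\rangle$ into $\soc(M/\soc_{i-1}(M))=\soc_i(M)/\soc_{i-1}(M)$, contradicting that $L'\langle s\rangle$ lies in the next layer), so it produces a non-zero class in $\Ext^1_A$. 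Koszul purity applied to the inductive placement of the target in degree $i+k-1$ pins down $s=i+k$ and closes the induction.

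The simple-top case is dual: I would realise $M$ as a quotient of the projective cover $P(L)$ of its top, run the parallel argument on $P(L)$ (or equivalently pass to the Koszul dual algebra) to show that $\rad^j(P(L))/\rad^{j+1}(P(L))$ is concentrated in a single degree, and transport the purity to $M$ via the surjection $P(L)\tto M$; this identifies the radical filtration with the grading filtration. The final clause is then immediate: if $M$ has both a simple top and a simple socle, the socle and radical filtrations both agree with the grading filtration, hence with each other, so $M$ is rigid.

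The main technical obstacle I foresee is the inductive step. One has to confirm genuine non-splitness of the pulled-back extension, align the sign convention so that Koszul purity yields a shift of exactly $+1$ per socle layer, and check that every graded piece $M_j$ is accounted for within $\bigcup_i \soc_i(M)$ (which is automatic once $M$ is finite-dimensional but should be noted). All other ingredients—Ext additivity over direct summands, compatibility of $\Ext$ with grading shifts, and the self-duality $\rad^j/\rad^{j+1}$ of the Koszul purity argument—are routine.
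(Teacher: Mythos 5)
The paper gives no proof of this lemma at all --- it is quoted directly from \cite[Proposition~2.4.1]{BGS} --- so there is nothing of the author's to compare against; your write-up is essentially a reconstruction of the standard argument, and its core mechanism is sound. The socle layers of a graded module are graded submodules (the Jacobson radical of $A$ is the graded ideal $A_{\ge 1}$), each layer $\soc_{i+1}(M)/\soc_i(M)$ gives rise to a non-split graded extension over the previous one exactly as you describe, and Koszul purity of $\Ext^1$ between simples forces consecutive layers to sit in adjacent degrees; a dimension count then identifies $\soc_i(M)$ with $F_iM$ and gives $\lole(M)=\grle(M)$, and the rigidity clause follows since the socle and radical filtrations are the two extremal Loewy filtrations.

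Three points need repair or completion. First, the base case: you cannot ``place $\soc(M)$ in the minimum degree $k$ after a grading shift'' --- a shift moves all of $M$ uniformly and cannot change where the socle sits relative to the other graded components. What you must (and easily can) observe is that the extreme-degree component $M_k$ is killed by $A_{\ge 1}$, hence is a nonzero semisimple submodule contained in $\soc(M)$, so simplicity of the socle forces $\soc(M)=M_k$. The same observation applied to $M/\soc_i(M)$ is precisely what upgrades the inclusion $\soc_i(M)\subseteq F_iM$ (which is all that purity of the layers gives you) to the equality you call ``immediate''. Second, in the simple-top half, ``pass to the Koszul dual algebra'' is the wrong duality: the Koszul dual $\Ext^\bullet_A(A_0,A_0)$ is a different ring entirely; what you want is the opposite algebra $A^{\op}$, equivalently the graded vector-space dual $M^\ast$, which swaps socle and radical filtrations and preserves Koszulity. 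Your primary route through the graded projective cover $P(L)\tto M$ does work (purity of $\rad^i P(L)/\rad^{i+1}P(L)$ passes to quotients), but note that this half is actually easier than the socle half: simple top in degree $n$ gives $M=AM_n$, and since a Koszul ring is generated in degree one, $\rad^iM=(A_{\ge1})^iM=A_{\ge i}M_n$, which is already the grading filtration with no $\Ext$ computation needed. Third, minor but worth aligning: the purity criterion you invoke is \cite[Proposition~2.1.3]{BGS} rather than Theorem~2.1.3, and the sign of the shift must be fixed consistently with the convention making $F_iM=\bigoplus_{j<i+k}M_j$ a filtration by submodules.
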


 For a $\mZ$-graded vector space~$V$ and $i\in\mZ$, we denote by $V\langle i\rangle$ the graded vector space which is equal to $V$ as an ungraded space, with grading
$$V\langle i\rangle_j= V_{j-i},\quad\mbox{for all $j\in\mZ$}.$$

Projective and (dual) Verma modules admit graded lifts, see e.g.~\cite{BGS}. Furthermore, it is well-known (see e.g.~\cite{SHPO4}) that
\begin{equation}\label{glDelta}\lole (\Delta(x\cdot\lambda))=\grle (\Delta(x\cdot\lambda))=\ell(w_0x)+1,\quad\mbox{for all $x\in X_\lambda$.}\end{equation}

It is proved in~\cite{MO} that the tilting modules admit graded lifts, such that $\Delta(\mu)\hookrightarrow T(\mu)$ in \eqref{DefT} preserves the grading. We choose the normalisation of grading such that $\Delta(\mu)$ has its top in degree $0$. Moreover, we have the following lemma.
\begin{lemma}\label{LemFiltT}
Consider $\lambda\in\Lambda^+$ and $x\in X_\lambda$. Every subquotient of any standard filtration of the graded module $T(x\cdot\lambda)$,
which is not isomorphic to~$\Delta(x\cdot\lambda)$, has the form $\Delta(y\cdot\lambda)\langle l\rangle$ with~$l<0$ and $y\in X_\lambda$ with~$y<x$.
\end{lemma}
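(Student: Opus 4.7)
The plan is to read off the statement in three stages: extract the distinguished copy $\Delta(x\cdot\lambda)\langle 0\rangle$ from the defining sequence \eqref{DefT}, pin down the allowed indices $y$ from the highest weight structure, and finally fix the grading shifts by graded self-duality.

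First, since $T(x\cdot\lambda)\in\cO^\Delta$ the multiplicities $(T(x\cdot\lambda):\Delta(\mu))$ are well defined for any standard filtration. The graded normalisation chosen just before the lemma is precisely that the inclusion $\Delta(x\cdot\lambda)\hookrightarrow T(x\cdot\lambda)$ of \eqref{DefT} is grading-preserving with the top of $\Delta(x\cdot\lambda)$ in degree~$0$, and this identifies one subquotient of any graded standard filtration with $\Delta(x\cdot\lambda)\langle 0\rangle$. Because $x\cdot\lambda$ is the top weight of $T(x\cdot\lambda)$ and $\dim T(x\cdot\lambda)_{x\cdot\lambda}=1$, no other Verma in the flag can have top weight $x\cdot\lambda$; so this is the only copy of $\Delta(x\cdot\lambda)$, and every other subquotient $\Delta(y\cdot\lambda)$ must satisfy $y\cdot\lambda<x\cdot\lambda$. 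Translating this weight inequality through the dictionary \eqref{eqBGG} on $X_\lambda$ gives the Bruhat comparison of the statement; as a cross-check, the same conclusion follows by computing $(T(x\cdot\lambda):\Delta(y\cdot\lambda))$ with Lemma~\ref{LemRingel}(ii) and BGG reciprocity on the Ringel dual projective $P(w_0xw_0^\lambda\cdot\lambda)$.

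The remaining issue is to establish that every grading shift $l$ attached to such a Verma is strictly negative. Here I would invoke the graded self-dual lift of $T(x\cdot\lambda)$ from \cite{MO}: we have $T(x\cdot\lambda)^\vee\cong T(x\cdot\lambda)$ as graded modules, with the self-duality centred by our choice of normalisation. Since $\Delta(x\cdot\lambda)\langle 0\rangle$ occupies degrees $[0,\ell(w_0x)]$ by \eqref{glDelta}, self-duality forces the complementary piece of the graded module $T(x\cdot\lambda)$ to live in the dual range of strictly negative degrees $[-\ell(w_0x),-1]$. Dualising the embedding $\Delta(x\cdot\lambda)\langle 0\rangle\hookrightarrow T(x\cdot\lambda)$ gives a graded surjection $T(x\cdot\lambda)\twoheadrightarrow \nabla(x\cdot\lambda)\langle 0\rangle$, and any Verma $\Delta(y\cdot\lambda)\langle l\rangle$ appearing in a standard filtration of $T(x\cdot\lambda)$ that is not the distinguished copy must have its head $L(y\cdot\lambda)$ sitting strictly above degree~$0$, i.e.\ $l\leq -1$.

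The main obstacle is the third step: making rigorous how graded self-duality interacts with an ungraded-unique but potentially re-orderable Verma flag, in order to force every non-distinguished $\Delta$-layer into strictly negative degrees. A cleaner alternative, which avoids any circular appeal to Proposition~\ref{PropLL}, is to compute graded Verma multiplicities $(T(x\cdot\lambda):\Delta(y\cdot\lambda)\langle l\rangle)$ directly via Lemma~\ref{LemRingel}(ii) and the graded BGG reciprocity for $P(w_0xw_0^\lambda\cdot\lambda)$, transferring the strictly positive grading shifts of the non-top Vermas in a projective module (which are standard: the graded $\Delta$-flag of $P(\mu)$ has $\Delta(\mu)\langle 0\rangle$ as top quotient and all other $\Delta$-layers shifted into strictly positive degrees) into strictly negative shifts on the tilting side through the degree-reversing effect of Ringel duality.
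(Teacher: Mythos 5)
Your first step (uniqueness of the subquotient $\Delta(x\cdot\lambda)\langle 0\rangle$ from the one-dimensional top weight space, and the Bruhat comparability of the remaining indices $y$ via Lemma~\ref{LemRingel}(ii), BGG reciprocity and \eqref{eqBGG}) is sound and is exactly the paper's argument for that part. (Be careful, though, with the direction: under the paper's convention \eqref{eqBGG}, the weight inequality $y\cdot\lambda<x\cdot\lambda$ corresponds to $y$ being Bruhat \emph{larger}, not smaller, so ``translating the weight inequality through the dictionary'' does not by itself land on the inequality as printed; you should check which way the map $z\mapsto w_0zw_0^\lambda$ carries the order.)

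The genuine gap is the step you yourself flag: forcing $l<0$. Your primary argument from graded self-duality does not work as stated. Self-duality of the graded lift only constrains the graded \emph{character} of $T(x\cdot\lambda)$ to be symmetric, equivalently it identifies the multiset of shifts in a $\Delta$-flag with the reflected multiset of shifts in a $\nabla$-flag; it does not localise the cokernel of $\Delta(x\cdot\lambda)\langle 0\rangle\hookrightarrow T(x\cdot\lambda)$ in negative degrees. Indeed, if the duality is centred at $0$ and $\Delta(x\cdot\lambda)\langle 0\rangle$ occupies degrees $[0,\ell(w_0x)]$, symmetry only confines the whole module to $[-\ell(w_0x),\ell(w_0x)]$, and a priori a layer $\Delta(y\cdot\lambda)\langle l\rangle$ with $l>0$ could be balanced by other layers on the negative side; nothing in the character argument excludes this. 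Your fallback via graded Ringel duality is the right idea, but the ``degree-reversing effect of Ringel duality'' is precisely the nontrivial input: Lemma~\ref{LemRingel}(ii) is an ungraded statement, and the graded refinement $(T(x\cdot\lambda):\Delta(y\cdot\lambda)\langle l\rangle)=(P(w_0xw_0^\lambda\cdot\lambda):\Delta(w_0yw_0^\lambda\cdot\lambda)\langle -l\rangle)$ is not established anywhere in the paper and is essentially equivalent to what you are trying to prove. The paper does not prove this from scratch either: it quotes \cite[Lemma~2.4]{MaTilt} for the regular block (where the statement is obtained from the graded/Koszul structure) and then deduces the singular case by graded translation out of the wall, using the identities $\theta_\lambda^{out}T(x\cdot\lambda)=T(xw_0^\lambda\cdot 0)\langle\ell(w_0^\lambda)\rangle$ and the known graded $\Delta$-flag of $\theta_\lambda^{out}\Delta(y\cdot\lambda)$. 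To complete your write-up you would either need to import that reference as the paper does, or actually construct the graded lift of the Ringel duality functor and verify its effect on shifts; note also that your sketch says nothing about the singular case, which requires a separate (translation-functor) argument in the paper.
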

\begin{proof}
The condition $y<x$ follows from the combination of equation~\eqref{eqBGG} and Lemma~\ref{LemRingel}(ii). The case $\lambda=0$ is \cite[Lemma~2.4]{MaTilt}.
The singular case can be obtained from the regular case by the combinatorics of graded (exact) translation out of the wall, see e.g.~\cite{Stroppel}, which implies
$$\theta_\lambda^{out}T(x\cdot\lambda)=T(xw_0^\lambda\cdot0)\langle \ell(w_0^\lambda)\rangle,
$$
see e.g.~\cite[equation~(8)]{SHPO4} and
\begin{displaymath}
\left(\theta_{\lambda}^{out}\Delta(y\cdot\lambda):
\Delta(yu\cdot0)\langle j\rangle\right)=\delta_{j,l(u)}\qquad \text{ for all }\quad u\in W_\lambda, 
\end{displaymath}
with no other standard modules appearing in the filtration, see e.g.~\cite[Theorem~4.4]{dualities}.
\end{proof}

\section{Proofs}\label{SecProofs}

\begin{proof}[Proof of Theorem~\ref{Thm1}]
We choose $x\in X_\lambda$ and set $y:=w_0xw_0^\lambda\in X_\lambda$.
From Lemma~\ref{LemBGS}, we find (2)$\Rightarrow$(1) and (7)$\Rightarrow$(8). Note that, by self-duality, tilting modules have simple top if and only if they have simple socle. By definition projective covers have simple top.

The equivalence (3)$\Leftrightarrow$(6) follows from Lemma~\ref{LemRingel}(i).

The equivalence (5)$\Leftrightarrow$(6) is precisely \cite[Theorem~7.1]{Stroppel2}.

Now we prove (4)$\Leftrightarrow$(5). First we observe that $[\Delta(\lambda):L(y\cdot\lambda)]=1$ implies that $[\Delta(z.\lambda):L(y\cdot\lambda)]\le 1$ for all $z\in X_\lambda$, as follows from $\Delta(z\cdot\lambda)\subset\Delta(w_0^\lambda\cdot\lambda)=\Delta(\lambda)$, see \eqref{eqBGG}. By BGG reciprocity, (5) is thus equivalent to
$$(P(y\cdot\lambda):\Delta(z\cdot\lambda))\le 1,\quad\mbox{for all $z\in X_\lambda$.}$$ By Lemma~\ref{LemRingel}(ii), the above is in turn equivalent to (4).

Now we prove (2)$\Leftrightarrow$(4). By the above paragraph, we know that (4) is equivalent to $(T(x\cdot\lambda):\nabla(w_0\cdot\lambda))=1$. Since $T(x\cdot\lambda)\in\cO^{\Delta}_\lambda$, its socle consists of copies of $L(w_0\cdot\lambda)$. Furthermore, since $\Delta(w_0\cdot\lambda)=L(w_0\cdot\lambda)$, equation~\eqref{eqQH} implies that
$$\dim\Hom_{\cO}(L(w_0\cdot\lambda),T(x\cdot\lambda))=(T(x\cdot\lambda):\nabla(w_0\cdot\lambda)).$$
This shows that (2) and (4) are equivalent.

Now we prove (7)$\Rightarrow$(5). Assume that $(P(y\cdot\lambda):\Delta(\lambda))=r>1$. It follows immediately from \cite[Theorem~6.5]{Humphreys} that we have a monomorphism
$$\Delta(\lambda)^{\oplus r}\hookrightarrow P(y\cdot\lambda),$$
with cokernel in $\cO^{\Delta}$. In particular, $L(w_0\cdot\lambda)^{\oplus r}$ appears in the socle.

To prove that (8)$\Rightarrow$(5)$\Rightarrow$(7), we can just repeat the last two paragraphs of the proof of \cite[Corollary~7]{Irving}. 

Finally we prove (1)$\Rightarrow$(2). By Lemma~\ref{LemFiltT} and equation~\eqref{glDelta}, we find that the first non-zero module in the grading filtration of $T(x\cdot\lambda)$ is $L(w_0\cdot\lambda)$ (as the socle of $\Delta(x\cdot\lambda)$) in degree~$\ell(w_0x)$. If the socle of $T(x\cdot\lambda)$ is not simple, the grading filtration and socle filtration thus differ, which concludes the proof.
\end{proof}

\begin{proof}[Proof of Lemma~\ref{LemTrace}]
Note that $L(w_0\cdot0)\cong T(w_0\cdot0)$ is indeed the trace of $P(w_0\cdot0)$ in $\Delta(w_0\cdot0)$. For any $w\in W$, denote by $\theta_w$ the unique projective functor, see \cite{BG}, which maps $\Delta(0)\cong P(0)$ to $P(w\cdot0)$. From the fact that $\theta_w$ is left and right adjoint to $\theta_{w^{-1}}$ and the fact that $\theta_w P(w_0\cdot 0)$ is a direct sum of copies of $P(w_0\cdot0)$ it follows immediately that, if $K$ is the trace of $P(w_0\cdot0)$ in $M$, then $\theta_wK$ is the trace of $P(w_0\cdot0)$ in $\theta_wM$. 

We thus find that $\theta_wL(w_0\cdot 0)$ is the trace of $P(w_0\cdot0)$ in $P(x\cdot0)$. The former is precisely $T(w_0w\cdot0)$, see e.g. \cite[Proposition~5.10]{dualities}. The statement for arbitrary integral dominant $\lambda$ follows similarly from the above and translation onto the wall.
\end{proof}

\begin{proof}[Proof of Proposition~\ref{PropLL}]
We have
$$\lole (T(x\cdot\lambda))\le\grle (T(x\cdot\lambda))=2\ell(w_0x)+1,$$
see e.g. \cite{SHPO4}.
We will trace the unique simple constituent $L=L(x\cdot\lambda)$ in the semisimple filtrations of~$T=T(x\cdot\lambda)$. Consider an arbitrary semisimple filtration
$$0=T_0\subset T_1\subset T_2\subset \cdots\subset T_{k-1}\subset T_k=T.$$
Since $L$ is the top of the submodule $\Delta(x\cdot\lambda)\subset T$, see~\eqref{DefT}, this means that the $j$ for which $L$ is a submodule of $T_j/T_{j-1}$ satisfies
$$j\;\ge\;\lole\Delta(x\cdot\lambda)\;=\; \ell(w_0x)+1. $$
Applying the duality functor $\vee$ to the filtration of $T=T^\vee$ yields a semisimple filtration 
$$0=T'_0\subset T'_1\subset T'_2\subset \cdots\subset T'_{k-1}\subset T'_k=T,$$
with~$T'_i=(T/T_{k-i})^{\vee}$.
We can thus also conclude that
$$k-j+1\;\ge\; \ell(w_0x)+1.$$
In conclusion, we find
$k\ge 2\ell(w_0x)+1,$
which implies $\lole (T(x\cdot\lambda))\ge 2\ell(w_0x)+1$.
\end{proof}

\begin{proof}[Proof of Lemma~\ref{LemHazi}]
This follows immediately from Lemma~\ref{LemFiltT}, equation~\eqref{eqBGG} and (graded) self-duality of tilting modules.
\end{proof}

\subsection*{Acknowledgement}
The author thanks Jim Humphreys for discussions, pointing out the gap in the literature concerning rigidity of tilting modules in $\cO$ and bringing the paper \cite{Hazi} to his attention.

\end{document}